\theoremstyle{plain}
\newtheorem{theorem}{Theorem}
\newtheorem{lemma}[theorem]{Lemma}
\theoremstyle{remark}
\newtheorem*{notations}{Notations and conventions}
\theoremstyle{definition}
\numberwithin{equation}{subsection}
\DeclareMathOperator{\ad}{ad}
\DeclareMathOperator{\Gal}{Gal}
\DeclareMathOperator{\id}{id}
\DeclareMathOperator{\Lie}{Lie}
\DeclareMathOperator{\Res}{Res}
\DeclareMathOperator{\Tr}{Tr}
\newcommand{\bQ}{\mathbb Q}
\newcommand{\bR}{\mathbb R}
\newcommand{\bC}{\mathbb C}
\newcommand{\bH}{\mathbb H}
\newcommand{\sA}{\mathcal A}
\newcommand{\sG}{\mathcal G}
\newcommand{\sV}{\mathcal V}
\newcommand{\germg}{\mathfrak g}
\newcommand{\germk}{\mathfrak k}
\newcommand{\germl}{\mathfrak l}
\newcommand{\germp}{\mathfrak p}
\newcommand{\germs}{\mathfrak s}
\newcommand{\germu}{\mathfrak u}
\newcommand{\germS}{\mathfrak S}
\newcommand{\tE}{\widetilde E}
\newcommand{\tG}{\widetilde G}
\newcommand{\tJ}{\widetilde J}
\newcommand{\tV}{\widetilde V}
\begin{document}

\title{Classification of Kuga fiber varieties}

\author{Salman Abdulali}
\address{Department of Mathematics, Mail Stop 561, East Carolina University, Greenville, NC 27858, USA}
\email{abdulalis@ecu.edu}

\subjclass[2010]{Primary 14G35, 14K10, 32M15}

\begin{abstract}
We  complete Satake's classification of Kuga fiber varieties by showing that if a representation $\rho$ of a hermitian algebraic group satisfies Satake's necessary conditions, then some multiple of $\rho$ defines a Kuga fiber variety.
\end{abstract}

\maketitle

\section{Introduction}

Kuga fiber varieties \cites{Kuga, Kuga2018} are families of abelian varieties $\sA \to \sV$, where $\sV = \Gamma \backslash G(\bR)^0 / K$ is an arithmetic variety, and $\sA$ is the pullback of the universal family of abelian varieties over a Siegel modular variety.
Here, $G$ is a semisimple algebraic group over $\bQ$ such that $G(\bR)$ is of hermitian type, $K$ is a maximal compact subgroup of $G(\bR)^0$, and $\Gamma$ is an arithmetic subgroup of $G(\bQ)$.
A Kuga fiber variety is constructed from a symplectic representation $\rho \colon G \to Sp(2n, \bQ)$ which is equivariant with a holomorphic map $\tau \colon X \to \germS_n$, where $X = G(\bR)^0 / K$ is the symmetric domain belonging to $G$, and $\germS_n$ is the Siegel space of degree $n$.
Kuga assumed that $\sV$ is compact; we do not make this assumption.

Kuga's original motivation was to prove the Ramanujan conjecture, a goal achieved by Deligne \cites{Deligne, DeligneWeil}.
Kuga fiber varieties, which include Shimura's \textsc{pel}-families \cite{Shimura}, have played a central role in the arithmetic theory of automorphic forms \cites{KugaShimura, Ohta1, Ohta2}. These varieties are also key to the study of algebraic cycles on abelian varieties and abelian schemes \cites{Abdulali1994c, Abdulali2002, Gordon1, Gordon2, HallKuga, Kuga1982, Mumford1969, Tjiok}; indeed, the concept of the Hodge group (or Mumford-Tate group) of an abelian variety arose in the context of Kuga fiber varieties \cite{Mumford1966}. Another area in which Kuga fiber varieties play a key role is in the study of $K3$-surfaces, via the Kuga-Satake construction of abelian varieties associated to $K3$-surfaces \cite{KugaSatake}, as in Deligne's proof of the Weil conjectures for these surfaces \cite{DeligneK3}.

We consider the following problem in this paper: Given an arithmetic variety $\sV$, classify all Kuga fiber varieties over it.
Equivalently, given the group $G$, find all representations of it into a symplectic group which are equivariant with holomorphic maps of the corresponding symmetric domains.
From another point of view, this problem is equivalent to the classification (up to isogeny) of the semisimple parts of the Hodge groups of abelian varieties, together with their action on the first cohomology of the abelian variety.
This problem was raised by Kuga \cite{Kuga} in the 1960's, and partially answered by Satake \cites{Satake1964, Satake1965a, Satake1965b, SatakeBoulder, Satake1966, Satake1967, Satakebook, Satake1997}.
Addington \cite{Addington} completed Satake's classification for $\bQ$-simple groups of type II (orthogonal groups) and type III (symplectic groups).
This was partially extended to non-simple groups of type III by Abdulali \cites{Abdulali1, Abdulali1988}.

Deligne \cite{Deligne1979} and Milne \cite{Milne2013} considered this problem from a somewhat different point of view. Their results are similar to those of Satake.
Milne states the theorem for all suitable representations of a $\bQ$-simple group, though he proves it only in the situations where Satake proved it, and he does not deal fully with non-simple groups.
It is important to deal with non-simple groups because the semisimple part of the Hodge group of a simple abelian variety need not be simple.
We give an example of such an abelian variety in Section \ref{example}.
Further examples may be found in Satake \cite{SatakeBoulder}*{Remark 2, p.~356}, Kuga \cite{Kuga1984}*{\S 5}, and Abdulali \citelist{\cite{Abdulali1988}*{\S 4} \cite{Abdulali1994a}*{\S 2.4}}.

Green, Griffiths, and Kerr \cites{GreenGriffithsKerr2012, GreenGriffithsKerr2013} and Patrikis \cite{Patrikis} have considered the more general problem of classifying the Hodge groups of Hodge structures of higher weight.
They completely classify the reductive groups which are Mumford-Tate groups of polarizable Hodge structures of arbitrary weight; however the
representations of the groups on the Hodge structures have not been classified.

The key to our classification is a reduction to the \emph{rigid} case, which is much easier.
In the proof of our main theorem (Theorem~\ref{maintheorem}) we reduce the general case to the rigid case,
which is proved in Theorem~\ref{rigid}. The inspiration for this strategy comes from the construction of families of families of abelian varieties by Kuga and Ihara \cite{KugaIhara}, and the related concept of ``sharing'' in Kuga \cite{Kuga1984}*{\S 5, p.~277}.

\begin{notations}
All representations are finite-dimensional and algebraic.
For a finite field extension $E$ of a field $F$, we let $\Res_{E/F}$ be
the restriction of scalars functor, from schemes over $E$ to schemes over~$F$.
For an algebraic or topological group $G$, we denote by $G^0$ the connected component of the identity.
\end{notations}

\section{Kuga fiber varieties}
\label{kuga}
In this section we give an overview of the construction of Kuga fiber varieties. Our primary purpose is to fix the notations and terminology; for details we refer to Satake \cite{Satakebook}.

\subsection{Groups of hermitian type}
Let $G$ be a group of hermitian type.
This means that $G$ is a semisimple real Lie group, and $X := G^0/K$ is a bounded symmetric domain for a maximal compact subgroup $K$ of $G^0$.
Denote by $1$ the identity element of $G$.
Let $\germg := \Lie G$ be the Lie algebra of $G$, $\germk := \Lie K$, and let
$\germg = \germk \oplus \germp$ be the corresponding Cartan decomposition.
Differentiating the natural map $\nu \colon G^0 \to X$ induces an isomorphism of
$\germp$ with $T_o(X)$, the tangent space of $X$ at the base point $o = \nu(1)$, and there exists
a unique $H_0 \in Z(\germk)$, called the \emph{$H$-element} at $o$,
such that $\ad H_0 | \germp$ is the complex structure on $T_o (X)$.

\subsection{Equivariant holomorphic maps}
Let $G_1$ and $G_2$ be groups of hermitian type, with symmetric spaces $X_1$ and $X_2$ respectively.
Let $H_0$ and $H'_0$ be $H$-elements at base points $o_1 \in X_1$ and $o_2 \in X_2$ respectively.
Let $\rho \colon G_1 \to G_2$ be a homomorphism of Lie groups.
We say that $\rho$ satisfies the \emph{$H_1$-condition} relative to the $H$-elements
$H_0$ and $H'_0$ if
\begin{equation}
\label{h1}
	[d \rho (H_0) - H'_0, d \rho (g)] = 0 \qquad \text{for all }g \in \germg.
\end{equation}
The stronger condition
\begin{equation}
\label{h2}
	d \rho (H_0) = H'_0
\end{equation}
is called the \emph{$H_2$-condition}. 
If either of these is satisfied, then there exists a unique holomorphic map
$\tau \colon X_1 \to X_2$ such that $\tau (o_1) = o_2$, and the pair $(\rho, \tau)$
is equivariant in the sense that
\[
	\tau (g \cdot x) = \rho(g) \cdot \tau (x) \qquad \text{for all } g \in G^0, \ x \in X.
\]
In fact, Clozel \cite{Clozel} has shown that if $G_2$ has no exceptional factors, then the $H_1$-condition is equivalent to the existence of an equivariant holomorphic map.

\subsection{The Siegel space}
Let $E$ be a nondegenerate alternating form on a finite-dimensional real vector space $V$.
The symplectic group $Sp(V, E)$ is a Lie group of hermitian type;
the associated symmetric domain is the \emph{Siegel space}
\begin{multline*}
	\germS(V, E) = \{\, J \in GL(V) \mid J^2 = -I \text{ and } \\
	E(x,Jy) \text{ is symmetric, positive definite}\, \}.
\end{multline*}
$Sp(V, E)$ acts on $\germS(V, E)$ by conjugation.
The $H$-element at $J \in \germS(V, E)$ is~$J/2$.

\begin{lemma}
\label{h2lemma}
Let $G$ be a group of hermitian type with symmetric domain $X$, and let $E$ be a nondegenerate alternating form on a finite-dimensional real vector space $V$.
Let $\rho \colon G \to Sp(V, E)$ satisfy the $H_2$-condition with respect to $H$-elements $H_0$ and $H'_0 = J_0/2$ at base points $o \in X$ and $J_0 \in \germS(V, E)$, respectively.
Then $J_0 \in \rho(G)$.
\end{lemma}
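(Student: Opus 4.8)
The plan is to realize $J_0$ directly as the image under $\rho$ of an element of the one-parameter subgroup of $G^0$ generated by $H_0$. Recall that for a homomorphism of Lie groups one has $\rho(\exp Y) = \exp(d\rho(Y))$ for every $Y$ in the Lie algebra. Applying this to $Y = \pi H_0 \in \germg$ (so that $\exp(\pi H_0) \in G^0 \subseteq G$) and invoking the $H_2$-condition \eqref{h2}, which gives $d\rho(H_0) = H'_0 = J_0/2$, we obtain
\[
	\rho\bigl(\exp(\pi H_0)\bigr) = \exp\bigl(\pi\, d\rho(H_0)\bigr) = \exp\!\Bigl(\tfrac{\pi}{2}\,J_0\Bigr),
\]
where the exponential on the right is the matrix exponential, since $Sp(V,E) \subseteq GL(V)$.

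The remaining step is to evaluate the right-hand side using $J_0^2 = -I$. Since $J_0$ is a complex structure on $V$, the usual power-series manipulation yields $\exp(sJ_0) = (\cos s)\,I + (\sin s)\,J_0$ for every real $s$; taking $s = \pi/2$ gives $\exp\!\bigl(\tfrac{\pi}{2}J_0\bigr) = J_0$. Combining this with the previous display, $J_0 = \rho\bigl(\exp(\pi H_0)\bigr) \in \rho(G)$, which is the assertion.

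I do not anticipate a genuine obstacle here; the argument is essentially a one-line exponentiation. The only points deserving a word of care are bookkeeping: that the normalization by $1/2$ in the convention ``the $H$-element at $J$ is $J/2$'' is precisely what makes $s = \pi/2$ (rather than some other multiple of $\pi$) land exactly on $J_0$; and that it is enough to use the identity $\rho \circ \exp = \exp \circ\, d\rho$ for the restriction of $\rho$ to $G^0$, which is all that is available a priori. As a byproduct one also sees that $-I = \rho(\exp(2\pi H_0)) \in \rho(G)$, though this is not needed for the statement.
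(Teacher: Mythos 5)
Your argument is correct and is essentially the paper's own proof: both exponentiate $\pi H_0$, use the $H_2$-condition to identify $d\rho(\pi H_0)$ with $\tfrac{\pi}{2}J_0$, and then evaluate $\exp\bigl(\tfrac{\pi}{2}J_0\bigr)=J_0$ (the paper does this last step by choosing a basis putting $J_0$ in standard block form, you by the power series identity $\exp(sJ_0)=(\cos s)I+(\sin s)J_0$ — an immaterial difference).
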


\begin{proof}
Since $J_0$ is a complex structure on $V$, there exists a basis of $V$ with respect to which
$J_0 = \begin{pmatrix} 0 & I_n \\ -I_n & 0 \end{pmatrix}$,
where  $2n = \dim V$.
Then, using the $H_2$-condition, we calculate that
$J_0 = \exp (\frac{\pi}{2} J_0) = \exp(\pi H'_0) = \exp(d\rho (\pi H_0)) = \rho (\exp(\pi H_0))$.
\end{proof}

\subsection{The fiber varieties}
We shall say that an algebraic group $G$ over a subfield of $\bR$ is of hermitian type if the Lie group $G(\bR)$ is of hermitian type.
Now let $G$ be a connected, semisimple algebraic group of hermitian type over $\bQ$. Assume that $G$ has no nontrivial, connected, normal subgroup $H$ such that $H(\bR)$ is compact.

Let $E$ be a nondegenerate alternating form on a finite-dimensional rational vector space $V$.
The symplectic group $Sp(V, E)$ is then a $\bQ$-algebraic group of hermitian type.
We write $\germS(V, E)$ for $\germS(V_{\bR}, E_{\bR})$.
Let $\rho \colon G \to Sp(V, E)$ be a representation defined over $\bQ$, which satisfies the $H_1$ condition with respect to the $H$-elements $H_0$ and $H'_0 = J/2$.
Let $\tau \colon X \to \germS(V, E)$ be the corresponding equivariant holomorphic map.
Let $\Gamma$ be a torsion-free arithmetic subgroup of $G(\bQ)$,
and $L$ a lattice in $V$ such that $\rho(\Gamma)L = L$.
Then the natural map
\[
	\sA = (\Gamma \ltimes_\rho L) \backslash (X \times V_{\bR})
	\longrightarrow \sV :=  \Gamma \backslash X
\]
is a morphism of smooth quasiprojective algebraic varieties
(Borel \cite{Borel1972}*{Theorem 3.10, p.~559} and Deligne \cite{900}*{p.~74}),
so that $\sA$ is a fiber variety over~$\sV$ called a \emph{Kuga fiber variety}.
The fiber $\sA_P$ over any point $P \in \sV$ is an abelian variety isomorphic to the torus
$V_{\bR}/L$ with the complex structure $\tau (x)$, where $x$ is a point in $X$ lying over $P$.

We say that a representation $\rho \colon G \to GL(V)$ defines a Kuga fiber variety if $\rho (G)$ is contained in a symplectic group $Sp(V, E)$, and $\rho$ satisfies the $H_1$-condition with respect to some $H$-elements.

\section{Satake's Classification}

\subsection{Necessary Conditions}
\label{necessary}
In a series of papers \cites{Satake1964, Satake1965a, Satake1965b, SatakeBoulder, Satake1966, Satake1967, Satakebook, Satake1997} Satake classified the $H_1$-representations of a given hermitian group into a symplectic group.
We summarize his results below.
Let $G$ be a connected, semisimple, linear algebraic group over $\bQ$.
Assume that $G(\bR)^0$ is of hermitian type, and has no nontrivial, connected, normal $\bQ$-subgroup $H$ with $H(\bR)$ compact.
After replacing $G$ by a finite covering, if necessary, we may write
\[
	\germg_{\bR} = \bigoplus_{j=0}^s \germg_j, \qquad G(\bR) = G_0 \times G_1 \times \dots \times G_s,
\]
where $G_0$ is compact, each $G_j$ is a noncompact absolutely simple Lie group for $j > 0$, and, each $\germg_j = \Lie(G_j)$.
Suppose $\rho \colon G \to Sp(V, E)$ is a symplectic representation satisfying the $H_1$-condition.
Then,

\begin{enumerate}
\item For $j = 1, \dots s$, we have that $G_j$ is one of the following:
\begin{enumerate}
	\item Type I:
		$SU(p, q)$ with $p \geq q \geq 1$;
	\item Type II:
		$SU^-(n, \bH)$ with $n \geq 5$ (this is the group that Helgason \cite{Helgason}*{p.~445} calls $SO^{\star}(2n)$);
	\item Type III:
		$Sp(2n, \bR)$ with $n \geq 1$;
	\item Type IV:
		$Spin(p, 2)$ with $p \geq 1, p \neq 2$.
\end{enumerate}
\item \label{stability} Let $\rho'$ be a nontrivial $\bC$-irreducible subrepresentation of $\rho_{\bC}$.
Then, for some index $j$ ($1 \leq j \leq s$), we have that  $\rho'$ is equivalent to $\rho_0 \otimes \rho_j$, where $\rho_0$ is a representation of $G_{0,\bC}$, and, $\rho_j$ is a representation of $G_{j,\bC}$.
We call this the \emph{stability} condition.
\item Fix an index $j$ with $1 \leq j \leq s$. Let $\rho_j$ be an irreducible subrepresentation of $V_{\bR}$ considered as a $G_j$-module. Then $\rho_j$ is either trivial or given by one of the following:
\begin{enumerate}
	\item
	\label{supq}
	If $G_j = SU(p,q)$ with $p \geq q \geq 2$, then $\rho_{j,\bC}$ is the direct sum of the standard representation of $G_{j,\bC} = SL_{p+q}(\bC)$ and its contragredient; it satisfies the $H_2$-condition if and only if $p=q$.
	\item If $G_j = SU(p, 1)$, then $\rho_j$ is one of the following:
		\begin{enumerate}
			\item\label{sup1gen} $\bigwedge^k \oplus \bigwedge^{p+1-k}$, for some $k$ with $1 \leq k < \frac{p+1}{2}$;
			\item $\bigwedge^k$ with $k = \frac{p+1}{2}$, and $p \equiv 1 \pmod{4}$;
			\item the direct sum of two copies of $\bigwedge^k$ with $k = \frac{p+1}{2}$, and $p \equiv 3 \pmod{4}$.
		 \end{enumerate}
	The $H_2$-condition is satisfied if and only if $k = \frac{p+1}{2}$.
	\item If $G_j = SU^-(n, \bH)$ with $n \geq 5$, then $\rho_{j,\bC}$ is the direct sum of two copies of the standard representation. The $H_2$-condition is satisfied in this case.
	\item If $G_j = Sp(2n, \bR)$, then $\rho_j$ is the standard representation, and satisfies the $H_2$-condition.
	\item If $G_j = Spin(p,2)$ with $p \geq 1$ and $p$ odd, then
		\begin{enumerate}
			\item $\rho_j$ is the spin representation if $p \equiv 1, 3 \pmod{8}$;
			\item $\rho_{j,\bC}$ is the direct sum of two copies of the spin representation if $p \equiv 5, 7 \pmod{8}$.
		\end{enumerate}
	In both cases, $\rho_j$ satisfies the $H_2$-condition.
	\item If $G_j = Spin(p,2)$ with $p \geq 4$, and $p$ even, then $\rho_j$ is
	\begin{enumerate}
		\item one of the two spin representations if $p \equiv 2 \pmod{8}$;
		\item the direct sum of two copies of a spin representation if $p \equiv 6 \pmod{8}$;
		\item the direct sum of the two spin representations if $p \equiv 0 \pmod{4}$.
	\end{enumerate}
	In each case, $\rho_j$ satisfies the $H_2$-condition.
\end{enumerate}
\end{enumerate}

We note that the above conditions imply that $\rho$ is self-dual.

\subsection{A sufficient condition}
Satake showed that the necessary conditions listed in \S \ref{necessary} are sufficient if we make an additional assumption.

\begin{theorem}[Satake \cite{Satake1967}]
\label{sataketype}
Let $G$ be a $\bQ$-simple hermitian group, and write $G(\bR) = \prod_{\alpha \in S} G_{\alpha}$ where each $G_{\alpha}$ is an absolutely simple real algebraic group.
Let $\rho$ be a representation of $G$ satisfying the conditions of \S\textup{\ref{necessary}}.
Assume further that each irreducible subrepresentation of $\rho_{\bC}$ is nontrivial on $G_{\alpha}$ for exactly one $\alpha$.
Then some multiple of $\rho$ defines a Kuga fiber variety.
\end{theorem}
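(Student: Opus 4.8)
The plan is to carry out Satake's construction: realize a multiple of $\rho$ inside a \emph{rational} symplectic space assembled from the absolutely simple case along a Galois orbit, and then force the positivity needed for the $H_1$-condition by a weak approximation argument over a totally real field.

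\emph{Step 1 (restriction of scalars).} Since $G$ is $\bQ$-simple, after passing to a finite covering we may write $G = \Res_{F/\bQ} G'$ for an absolutely almost simple group $G'$ over a number field $F$, the factors $G_\alpha$ of $G(\bR)$ corresponding to the archimedean places $v$ of $F$, with each $G_\alpha \cong G'(F_v)$ of one of the types in item (1) of \S\ref{necessary}. Let $F_0$ be the maximal totally real subfield of $F$; then either $F = F_0$ or $F/F_0$ is CM, according to whether the type is III/IV or I/II. By the stability condition together with the standing hypothesis that each $\bC$-irreducible constituent of $\rho_{\bC}$ is nontrivial on exactly one $G_\alpha$, every $\bQ$-irreducible constituent $W$ of the $G$-module $V$ acquires, over $\overline{\bQ}$, the shape of a Galois-conjugate family: it is induced from a single $G'$-module $W'$, defined over a finite extension $L$ of $F$, of the form ``(a standard representation from item (3) of \S\ref{necessary}) $\otimes$ (a representation of the compact part)''.

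\emph{Step 2 (building block and descent).} For each such $W'$, invoke Satake's absolutely simple construction (his earlier papers, summarized in \cite{Satakebook}): there is a division algebra $D$ over $L$ or over a quadratic extension of $L$, with a positive involution of the type dictated by the tables of \S\ref{necessary}, so that $W'$ is a $D$-module on which $G'$ preserves a nondegenerate $\varepsilon$-hermitian form $\tbeta$, with $\varepsilon$ and the involution type arranged so that the eventual $\bQ$-bilinear form is alternating — possible precisely because the conditions of \S\ref{necessary} make $\rho$ self-dual, with the orthogonal-versus-symplectic dichotomy recorded type by type. Fix a totally imaginary $\theta \in F$ when $F/F_0$ is CM (and $\theta = 1$ otherwise) and a constant $c \in F_0^{\times}$ to be chosen; form the $F_0$-bilinear alternating form obtained from $\Tr_{F/F_0}(\theta c\,\tbeta)$ by composing with the reduced trace of $D$, and set $\tE_W := \Tr_{F_0/\bQ}$ of it. This is a nondegenerate rational alternating form on $W$ viewed as a $\bQ$-vector space, invariant under $G = \Res_{F/\bQ} G'$. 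Taking an orthogonal sum of such summands over the $\bQ$-irreducible constituents of $V$, with suitable multiplicities, produces a rational symplectic space $(\tW, \tE)$ and a $\bQ$-representation $\tilde\rho \colon G \to Sp(\tW, \tE)$ whose complexification is a direct sum of copies of $\rho_{\bC}$; hence $\tilde\rho$ is a multiple of $\rho$.

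\emph{Step 3 (the $H_1$-condition) and the main obstacle.} It remains to choose $c$ so that $\tilde\rho$ satisfies the $H_1$-condition for some $H$-elements, after which $\tilde\rho$ defines a Kuga fiber variety by definition. Since $G$ is $\bQ$-simple, $H_0 = \sum_\alpha H_0^{(\alpha)}$ with $H_0^{(\alpha)}$ the $H$-element of the noncompact factor $G_\alpha$, and by Schur's lemma $d\tilde\rho(H_0) - H_0'$ is scalar on each $\bC$-irreducible constituent, so any $H_0'$ obtained from $d\tilde\rho(H_0)$ by subtracting such a central element automatically satisfies $[d\tilde\rho(H_0) - H_0', d\tilde\rho(g)] = 0$; the real task is to make this $H_0'$ an honest $H$-element at a point of $\germS(\tW, \tE)$, i.e.\ to arrange that $J_0 := 2H_0'$ satisfies $J_0^2 = -I$ and that $\tE(x, J_0 y)$ is positive definite. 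Working one standard block at a time — legitimate because, by hypothesis, the constituents do not mix the noncompact factors — the eigenvalues of $d\tilde\rho(H_0^{(\alpha)})$ are explicitly known; subtracting the forced scalar (nonzero exactly in the cases $SU(p,q)$ with $p \neq q$ and $SU(p,1)$ with $k < \tfrac{p+1}{2}$, whence $H_1$ and not $H_2$) turns $J_0$ into a complex structure with equal-dimensional $\pm i$-eigenspaces, while in the remaining cases $d\tilde\rho(H_0) = H_0'$ already, and $J_0 \in \rho(G)$ by Lemma~\ref{h2lemma}. The crux is then positivity of $\tE(x, J_0 y)$: at each archimedean place $v$ of $F_0$ this is a sign condition on the image $c^{(v)}$, the intrinsic compatibility of $\tbeta^{(v)}$ with $J_0^{(v)}$ being guaranteed by the classification (this is exactly why the representations of item (3) are the admissible ones), so one only needs a single $c \in F_0^{\times}$ realizing the prescribed sign pattern at all real places, which exists by weak approximation. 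I expect the genuine effort to be in this last bookkeeping — matching, type by type, the eigenvalue computation for the $H$-element against the symmetry and definiteness of the descended trace form, and keeping track of how many copies of each constituent and which twist $\theta$ are needed to land in the symplectic rather than the orthogonal world.
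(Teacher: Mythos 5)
You should first note that the paper does not prove this statement at all: Theorem~\ref{sataketype} is imported verbatim from Satake \cite{Satake1967} (see also \cite{Satakebook}), so there is no internal proof to compare against. Judged as a reconstruction of Satake's argument, your outline has the right architecture, and it matches the machinery the paper itself deploys later: writing $G = \Res_{F/\bQ}G'$, building the rational symplectic form as a trace $\Tr_{F/\bQ}$ of a form scaled by a constant $c$, reducing the $H_1$-condition to a centrality statement via Schur's lemma, and forcing positivity of $E(x,J_0y)$ by choosing $c$ with prescribed signs at the archimedean places (this is exactly the weak-approximation step, with the constants $c_j$, that closes the proof of Theorem~\ref{rigid}). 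One small correction: for a $\bQ$-simple hermitian group the field $F$ with $G = \Res_{F/\bQ}G'$ is already totally real; the CM extension does not appear as the field of definition of $G'$ but as the center of the division algebra $D$ carrying the positive involution in type I (second kind). Your dichotomy ``$F = F_0$ or $F/F_0$ CM'' conflates these two roles.

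The genuine gap is that the load-bearing part of the theorem is precisely the ``bookkeeping'' you defer at the end of Step 3. That the descended trace form can be arranged to be alternating rather than symmetric, that the multiplicities prescribed in item (3) of \S\ref{necessary} (one copy versus two copies versus a representation plus its contragredient, depending on $p \bmod 8$, on whether $p=q$, etc.) are exactly the ones for which the $\varepsilon$-hermitian form of the correct parity exists over the correct division algebra, and that the resulting $J_0$ has equal-dimensional $\pm i$-eigenspaces compatible with definiteness of $\tE(x,J_0y)$ --- these assertions are the theorem, not a routine check, and they occupy the bulk of Satake's Acta paper. Your phrase ``the intrinsic compatibility of $\tbeta^{(v)}$ with $J_0^{(v)}$ being guaranteed by the classification'' assumes the conclusion at the one point where a proof is required: the classification of \S\ref{necessary} records only the \emph{necessary} conditions, and the content of Theorem~\ref{sataketype} is that, under the extra non-mixing hypothesis, they can actually be realized over $\bQ$. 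So what you have is a correct and well-organized scaffold for Satake's proof, not a proof.
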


\subsection{More on type I}
\label{nonh2}
We now take a closer look at the $H_1$-representation $\rho \colon SU(p,q) \to Sp(V, E)$ given by item (\ref{supq}) of the list in \S\ref{necessary}.
We recall the matrix representation of this given by Satake.
Let $J_0 \in \germS(V, E)$ be the base point.
The eigenvalues of $J_0$ on $V_{\bC}$ are $i$ and $-i$, and we take a basis of $V_{\bC}$ with respect to which the matrix of $J_0$ is
$\begin{pmatrix} iI_m & 0 \\ 0 & -iI_m \end{pmatrix}$.
The Lie algebra of $Sp(V, E)$ with respect to this basis is given by
\[
\germs\germp (V, E) =
\left\{ \begin{pmatrix} A & B \\ C & -{}^tA \end{pmatrix} \, \middle| \, B, C \text{ symmetric} \right\},
\]
and the $H$-element is $H'_0 = \begin{pmatrix} \frac{i}{2}I_m & 0 \\ 0 & -\frac{i}{2}I_m \end{pmatrix}$.

With respect to a suitable basis the Lie algebra of $SU(p,q)$ is given by
\[
	\germs\germu(p,q) = \left\{
	\begin{pmatrix} X_1 & X_{12} \\ {}^t \overline{X}_{12} & X_2 \end{pmatrix} \in \germs\germl_{p+q}(\bC) \, \middle| \,
	\begin{matrix} X_1 \in M_p(\bC), X_2 \in M_q(\bC) \\ {}^t\overline{X}_j = -X_j (j=1,2) \end{matrix}
	\right\},
\]
and an $H$-element is given by
\[
	H_0 = \begin{pmatrix} \frac{qi}{p+q} I_p & 0 \\ 0 & -\frac{pi}{p+q} I_q \end{pmatrix}.
\]
Then, $d\rho \colon \germs\germu(p,q) \to \germs\germp_{2p+2q}$ is given by
\[
	 \begin{pmatrix} X_1 & X_{12} \\ {}^t \overline{X}_{12} & X_2 \end{pmatrix} \mapsto
	 \begin{pmatrix} \overline{X}_2 & 0 & 0 & {}^tX_{12} \\
	 0 & X_1 & X_{12} & 0 \\
	 0 & {}^t\overline{X}_{12} & X_2 &0 \\
	 \overline{X}_{12} & 0 & 0 & \overline{X}_1 \end{pmatrix}.
\]
We extend $d\rho$ to $\germu(p,q)$, and denote by
\[
	\bar{\rho} \colon U(p,q) \to Sp(2p+2q, \bR)
\]
the corresponding map of Lie groups which extends $\rho$.
Let
\[
	\bar{H}_0^{p,q} = \begin{pmatrix} \frac{i}{2}I_p & 0 \\ 0 & -\frac{i}{2}I_q \end{pmatrix}.
\]
Then $d\bar{\rho} (\bar{H}_0) = H'_0$.
It follows, as in the proof of Lemma \ref{h2lemma}, that $J_0 \in \bar{\rho} (U(p,q))$. 

Consider, next, the $H_1$-representation $\rho \colon SU(p, 1) \to Sp(2m, \bR)$ given in item (\ref{sup1gen}) of the list in \S \ref{necessary}.
We extend it to a representation $\bar{\rho} \colon U(p, 1) \to Sp(2m, \bR)$.
Let
\[
	\tilde{H}_0 = \begin{pmatrix} \frac{i}{2k}I_p & 0 \\ 0 & \frac{1-2k}{2k}i \end{pmatrix}.
\]
Then
\[
	\bigwedge^k (\tilde{H}_0) = \begin{pmatrix} \frac{i}{2}I_p' & 0 \\ 0 & -\frac{i}{2}I_q' \end{pmatrix}
	= \bar{H}_0^{p',q'},
\]
where $p' = \binom{p}{k}$ and $q' = \binom{p}{k-1}$.
From this we see that $d\bar{\rho} (\tilde{H}_0) = H'_0$, the $H$-element of $Sp(2m, \bR)$.
It follows, as in the proof of Lemma \ref{h2lemma}, that $J_0 \in \bar{\rho} (U(p,1))$. 

\section{The rigid case}

\subsection{Statement of the theorem}

Let $G$ be an algebraic group over $\bQ$ of hermitian type, and $\rho$ a representation of $G$ satisfying Satake's conditions in \S\ref{necessary}.
By the stability condition (\ref{stability}) of \S\ref{necessary}, every irreducible subrepresentation of $\rho_{\bC}$ is nontrivial on at most one noncompact factor of $G(\bR)$.
We say that $\rho$ is \emph{rigid}, if every irreducible subrepresentation of $\rho_{\bC}$ is nontrivial on exactly one noncompact factor of $G(\bR)$.
We begin by classifying the rigid representations which define Kuga fiber varieties.

\begin{theorem}
\label{rigid}
Let $G$ be a semisimple connected algebraic group over $\bQ$ such that $G(\bR)^0$ is of hermitian type and has no compact factors defined over $\bQ$.
Let $\rho$ be a representation of $G$ satisfying Satake's conditions in \S\textup{\ref{necessary}}.
If $\rho$ is rigid, then some multiple of $\rho$ defines a Kuga fiber variety.
\end{theorem}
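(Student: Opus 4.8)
The plan is to split $\rho$ into Galois‑stable ``blocks'', dispose of each block by Satake's Theorem~\ref{sataketype} or by a direct construction, and reassemble the pieces by taking direct sums. Concretely: first replace $G$ by a finite covering so that $G = H_1 \times \dots \times H_t$ is a direct product of $\bQ$‑simple groups $H_k = \Res_{F_k/\bQ} H_k'$ with $H_k'$ absolutely simple over a number field $F_k$; by the hypothesis on compact factors each $H_k$ has a noncompact archimedean factor. (Passing to a covering changes neither $\rho(G)$ nor the $H$‑elements, hence not whether $\rho$ defines a Kuga fiber variety.) Now $\Gal(\bar\bQ/\bQ)$ acts on the set of simple factors of $G_{\bar\bQ}$, stabilising each $H_k$, and permutes the $\bC$‑irreducible constituents of $\rho_\bC$ compatibly with their supports (the sets of factors on which they are nontrivial). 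Call two simple factors \emph{linked} if some constituent of $\rho_\bC$ is nontrivial on both, and pass to the transitive closure; this is a $\Gal(\bar\bQ/\bQ)$‑equivariant equivalence relation, so its classes fall into $\Gal$‑stable \emph{blocks}, each block being the set of simple factors of a connected normal $\bQ$‑subgroup $G^{[m]}$. After a further covering $G = \prod_m G^{[m]}$; and since the assignment of each constituent of $\rho_\bC$ to a block is $\Gal$‑stable, it descends to a $\bQ$‑rational decomposition $V = \bigoplus_m V^{[m]}$, $\rho = \bigoplus_m \rho_m$, with each $\rho_m \colon G^{[m]} \to GL(V^{[m]})$ defined over $\bQ$, rigid, satisfying Satake's conditions, and with no constituent of $\rho_{m,\bC}$ nontrivial on a factor outside $G^{[m]}$.

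It suffices to show that some multiple of each $\rho_m$ defines a Kuga fiber variety for $G^{[m]}$. Indeed, if $c_m\rho_m$ does, with invariant alternating form $E_m$, $H$‑element $H_0^{(m)}$ of $G^{[m]}$, and $J_m$ in the Siegel space of $E_m$, then for $N$ a common multiple of the $c_m$ the representation $N\rho = \bigoplus_m (c_m\rho_m)^{\oplus N/c_m}$ carries the alternating form $\bigoplus_m E_m^{\oplus N/c_m}$, has $H$‑element $\sum_m H_0^{(m)}$, and $\bigoplus_m J_m^{\oplus N/c_m}$ lies in the corresponding Siegel space; the $H_1$‑condition for $N\rho$ holds since on each summand it is that of $\rho_m$ while $G^{[m]}$ acts trivially on the summands indexed by $m' \ne m$, so all cross‑commutators vanish. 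Hence $N\rho$ defines a Kuga fiber variety.

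Fix a block and write $H = G^{[m]}$, $\sigma = \rho_m$, $W = V^{[m]}$. If $H$ is $\bQ$‑simple and $\sigma$ is trivial on every compact archimedean factor of $H(\bR)$, then every constituent of $\sigma_\bC$ is nontrivial on exactly one factor of $H(\bR)$ and Satake's Theorem~\ref{sataketype} gives the claim. In general (``sharing'': $\sigma$ nontrivial on a compact factor, possibly across several $\bQ$‑simple factors of $H$, as in Shimura's \pel{}‑families \cite{Shimura}) I would construct the data directly. Self‑duality of $\rho$ (noted at the end of §\ref{necessary}) gives a nondegenerate $H$‑invariant bilinear form on $W$ over $\bQ$, which after replacing $\sigma$ by $\sigma\oplus\sigma$ if necessary — to turn a symmetric form into an alternating one, and to absorb any obstruction from the endomorphism algebra — may be taken alternating; call it $E$. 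Let $H_0$ be the $H$‑element of $H$. On each constituent $\rho' \cong \rho_0\otimes\rho_j$ one has $d\rho'(H_0) = I\otimes d\rho_j(H_{0,j})$, because $d\rho_0(H_0) = 0$ and $\rho'$ is trivial on the other noncompact factors (this is exactly where rigidity is used); the list in §\ref{necessary} and the normalisations of §\ref{nonh2} show that $d\rho_j(H_{0,j})$ has exactly two eigenvalues, one with positive and one with negative imaginary part, differing by $i$. Define $J$ on $\rho'$ to act as $+i$ on the first eigenspace and as $-i$ on the second. Then $J^2 = -I$, $J$ is defined over $\bR$, $d\rho(H_0) - J/2$ is a scalar on each constituent (hence commutes with $d\rho(\germg)$), and — after fixing the sign of $E$ — $E(x,Jy)$ is symmetric and positive definite, the positivity on each constituent following from the explicit computations of §\ref{nonh2} together with the unitarity of the compact‑factor part $\rho_0$. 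Thus $J\in\germS(W_\bR,E_\bR)$ and $\sigma$ (or $\sigma\oplus\sigma$) defines a Kuga fiber variety for $H$.

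I expect the last paragraph to be the main obstacle: the $\Gal$‑descent must be carried out carefully enough that the block structure, the realisation of the ``sharing'' blocks as restrictions of scalars, and the choice of the rational alternating form $E$ all stay compatible, and — above all — one must verify the positive‑definiteness of $E(x,Jx)$ on the ``shared'' constituents, precisely where the numerology of Satake's list and the normalisations of §\ref{nonh2} enter. The remaining steps (the descent itself and the reassembly) are formal manipulations with the product decomposition.
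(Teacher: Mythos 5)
Your outer architecture (reduce to Galois-stable blocks, dispatch the unshared blocks by Theorem~\ref{sataketype}, reassemble by direct sums) matches the paper's reduction to a multiple of a $\bQ$-irreducible representation, and the reassembly argument is fine. But the entire mathematical content of the theorem lives in the ``sharing'' case, and there your sketch has a genuine gap exactly where you suspect it does. The step ``after fixing the sign of $E$ \dots\ $E(x,Jy)$ is symmetric and positive definite'' would fail as stated: the $\bQ$-rational invariant alternating form restricts to each Galois-conjugate component $\tV^{\sigma}_{\bR}$ of $V_{\bR}$, and over $\bQ$ you cannot adjust its sign (or size) independently at the different archimedean places. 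The paper gets around this by an explicit construction: for each tensor factor $\rho'_{\alpha}$ of a shared constituent it applies Satake's theorem to $\hat\rho_{\alpha}=\sum_{\sigma}\rho_{\alpha}^{\sigma}$ to obtain $F_{j(\alpha)}$-rational symplectic data $(\tV_{\alpha},\tE_{\alpha},J_{\alpha})$, chooses auxiliary $F_{j(\alpha)}$-rational invariant positive definite symmetric forms $\gamma_{\alpha}$, and sets $\tE(\otimes x_{\alpha},\otimes y_{\alpha})=\sum_{\alpha}\tE_{\alpha}(x_{\alpha},y_{\alpha})\prod_{\beta\neq\alpha}\gamma_{\beta}(x_{\beta},y_{\beta})$ over the Galois closure $F$, with $E=\Tr_{F/\bQ}\tE$. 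Even then $E(x,Jx)$ is only a sum $\sum_{\sigma}Q^{\sigma}(x)$ in which the conjugate terms need not be positive, and positivity is rescued by Addington's lemma together with a weak-approximation choice of scalars $c_{j}\in F_{j}$ with $\alpha(c_{j})>N$ at noncompact places and $0<\alpha(c_{j})<1$ at compact ones. None of this is recoverable from ``self-duality plus doubling.''

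A second, smaller gap: the symmetry of $E(x,Jy)$ on a shared constituent forces the auxiliary symmetric form $\gamma_{\alpha_{0}}$ on the noncompact tensor factor to be $J_{\alpha_{0}}$-invariant. When $\rho_{\alpha_{0}}$ satisfies the $H_{2}$-condition this follows from Lemma~\ref{h2lemma} ($J_{\alpha_{0}}$ lies in the image of the group), but in the type I cases with only the $H_{1}$-condition ($SU(p,q)$, $p\neq q$, and $SU(p,1)$ with $k\neq\frac{p+1}{2}$) one must extend the representation to the full unitary group $U(p,q)$ and use the explicit matrix computations of \S\ref{nonh2} to see that $J_{\alpha_{0}}$ is still in the image. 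Your eigenspace definition of $J$ (which is essentially the paper's $\tJ=J_{\alpha_{0}}\otimes\id$) does not by itself deliver this invariance, and without it the symmetry of $E(\cdot,J\cdot)$ breaks down precisely in the non-$H_{2}$ cases. In short: the skeleton is right, but the two load-bearing steps --- the rational tensor-product construction of $E$ with its trace-and-rescale positivity argument, and the $J$-invariance of the auxiliary symmetric forms --- are the proof, and they are absent.
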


The rest of this section is devoted to the proof of this theorem.

\subsection{Beginning of the proof}
Without loss of generality we assume that $G$ is simply connected, and $\rho$ is nontrivial and a multiple of a $\bQ$-irreducible representation (see Satake \cite{Satakebook}*{p.~189}).

Write $G = \prod_{j=1}^t G_j$, where each $G_j$ is a simple group of hermitian type.
Then there are totally real number fields $F_j$, and absolutely simple groups $\tG_j$ over $F_j$, such that $G_j = \Res_{F_j/\bQ} \tG_j$ for $1 \leq j \leq t$.
Let $F$ be the smallest Galois extension of $\bQ$ containing all the $F_j$, and $\sG = \Gal (F/\bQ)$.
Let $S_j$ be the set of embeddings of $F_j$ into $\bR$, and let $S$ be the disjoint union of the $S_j$'s.
For $\alpha \in S$, we let $j(\alpha)$ be the unique index such that $\alpha \in S_{j(\alpha)}$.
We note that $F$ is a totally real field, $\sG$ acts on $S$, and the orbits of this action are the sets $S_j$.
For $\alpha \in S$, we let $G_{\alpha} = \widetilde{G}_{j(\alpha)} \otimes_{F_{j(\alpha)}, \alpha} F$.
Then $G_F = \prod_{\alpha \in S} G_{\alpha}$.

Let $S_0 = \{ \alpha \in S \mid G_{\alpha, \bR} \text{ is not compact} \}$.
An $H$-element of $G_{\bR}$ is given by $H_0 = \sum_{\alpha \in S_0} H_{0, \alpha}$,
where $H_{0, \alpha}$ is an $H$-element of $G_{\alpha, \bR}$.

Let $\rho_0 \colon G_{\bC} \to GL(V_0)$ be a $\bC$-irreducible subrepresentation of $\rho_{\bC}$.
Let
\[
	M = \{ \alpha \in S \mid \rho_0 \text{ is nontrivial on } G_{\alpha,\bC} \},
\]
and let $\alpha_0$ be the unique element of $M$ such that $G_{\alpha_0, \bR}$ is not compact.
Write $\rho_0 = \otimes_{\alpha \in M} \rho'_{\alpha}$, where $\rho'_{\alpha}$ is an irreducible representation of $G_{\alpha, \bC}$.
Either $\rho'_{\alpha}$, or the sum of two copies of $\rho'_{\alpha}$, or, the direct sum of $\rho'_{\alpha}$ and its complex conjugate is defined over $\bR$.
Let $\rho_{\alpha}$ be the real representation such that $\rho'_{\alpha}$ equals $\rho_{\alpha, \bC}$, the direct sum of two copies of $\rho_{\alpha, \bC}$, or the direct sum of $\rho_{\alpha, \bC}$ and its complex conjugate, respectively.

For each $\alpha \in M$, let $\hat{\rho}_{\alpha} = \sum_{\sigma \in \sG} \rho_{\alpha}^{\sigma}$.
Then $\hat{\rho}_{\alpha}$ is a representation of $G_{j(\alpha)}$ satisfying the hypotheses of Theorem~\ref{sataketype},
so some multiple of it defines a Kuga fiber variety.
By Satake's construction (see \cite{Satakebook}*{\S IV.6, Theorems 6.1, 6.2, 6.3}), this representation is defined over $F_{j(\alpha)}$ in the sense that it is the restriction from $F_{j(\alpha)}$ to $\bQ$ of a symplectic representation
\[
	\tilde{\rho}_{\alpha} \colon \tG_{j(\alpha)} \to Sp(\tV_{\alpha}, \tE_{\alpha}).
\]
Here, $\tE_{\alpha}$ is an $F_{j(\alpha)}$-bilinear alternating form on $\tV_{\alpha}$, and $E_{\alpha} = \Tr_{{F_{j(\alpha)}}/\bQ} \tE_{\alpha}$ is a $G_{j(\alpha)}$-invariant $\bQ$-bilinear alternating form on $V_{\alpha} = \Res_{{F_{j(\alpha)}}/\bQ}\tV_{\alpha}$.
Let $V_{\alpha} = \Res_{F_{j(\alpha)}/{\bQ}} \tV_{\alpha}$.
Then $V_{\alpha} \otimes F_{j(\alpha)} = \oplus_{\sigma \in \sG} \tV_{\alpha}^{\sigma}$ (up to multiplicity), and $\tV_{\alpha}^{\sigma}$ is the representation space of $\rho_{\alpha}^{\sigma}$.
For each $\alpha \in S_{j(\alpha)}$, there is a complex structure $J_{\alpha}$ on $\tV_{\alpha,\bR}$ such that $\tE_{\alpha} (x, J_{\alpha}y)$ is a symmetric, positive definite form.
Moreover, $\rho_{\alpha_0} \colon G_{\alpha_0, \bR} \to Sp(\tV_{\alpha_0,\bR}, \tE_{\alpha_0})$ satisfies the $H_1$-condition with respect to the $H$-elements $H_{0, \alpha_0}$ and $\frac{1}{2} J_{\alpha_0}$.

\subsection{Construction of a symmetric form}
We claim that for each $\alpha \in M$ there exists a $G_{\alpha}$-invariant, $F_{j(\alpha)}$-bilinear positive definite symmetric form $\gamma_{\alpha}$ on $\tV_{\alpha}$.
The space of symmetric positive definite $G_{\alpha, \bR}$-invariant forms on $\tV_{\alpha,\bR}$ is an open subset of the space of symmetric forms; it is nonempty because $\tE_{\alpha} (x, J_{\alpha}y)$ is such a form. Therefore it contains an $F_{j(\alpha)}$-rational point $\gamma_{\alpha}$. This proves the claim.

We next claim that when $\alpha = \alpha_0$ we have
\begin{equation}
\label{J_invariance}
	\gamma_{\alpha_0} (J_{\alpha_0}x, J_{\alpha_0}y) = \gamma_{\alpha_0} (x,y).
\end{equation}
If $\rho_{\alpha_0}$ satisfies the $H_2$-condition, then Lemma \ref{h2lemma} shows that
$J_{\alpha_0} $ belongs to the image of $G_{\alpha_0}(\bR)$ under $\rho_{\alpha_0}$, so (\ref{J_invariance}) is a consequence of $\gamma_{\alpha_0}$ being $G_{\alpha_0}$-invariant.
Finally, we consider the situation when the $H_2$-condition is not satisfied.
Then we are in either case (\ref{supq}) or case (\ref{sup1gen}) of \S\ref{necessary}.
In both cases, $G_{\alpha}$ is a special unitary group $SU(W, h)$.
We can extend $\hat{\rho}_{\alpha}$ to a representation of the full unitary group $U(W, h)$.
We have seen in \S\ref{nonh2} that  $J_{\alpha_0}$ belongs to the image of $U(W,h)$, so now we can argue as before to prove (\ref{J_invariance}) in this situation.

Observe that (\ref{J_invariance}) is equivalent to
\begin{equation}
\label{J_alternate}
	\gamma_{\alpha_0} (x, J_{\alpha_0}y) = - \gamma_{\alpha_0} (y,J_{\alpha_0}x),
\end{equation}
since $J_{\alpha_0}$ is a complex structure.

\subsection{Construction of an alternating form}
Let $\tV = \otimes_{\alpha \in M} (\tV_{\alpha} \otimes_{F_{j(\alpha)}} F)$.
Define an $F$-bilinear alternating form $\tE$ on $\tV$ by
\[
	\tE (\otimes_{\alpha} x_{\alpha}, \otimes_{\alpha} y_{\alpha}) =
	\sum_{\alpha \in M} \Bigg( \tE_{\alpha} (x_{\alpha}, y_{\alpha}) \prod_{\substack{\beta \in M \\ \beta \neq \alpha}} \gamma_{\beta} (x_{\beta}, y_{\beta}) \Bigg).
\]
Then $\tE$ is $\tG$-invariant, where $\tG = \prod_{j=1}^t \tG_j$.
Next, we define a $\bQ$-bilinear alternating form $E$ on $V = \Res_{F/\bQ} \tV$ by
\[
	E(x,y) = \Tr_{F/\bQ} \tE(x,y).
\]
Then $E$ is $G$-invariant.

\subsection{Construction of a complex structure}
We next construct a complex structure $\tJ$ on $\tV_{\bR} = \otimes_{\alpha \in M} (\tV_{\alpha} \otimes_{F_{j(\alpha)}} \bR)$.
We have seen that we have a complex structure $J_{\alpha_0}$ on $\tV_{\alpha_0, \bR}$ such that
$\tE_{\alpha_0} (x, J_{\alpha_0} y)$ is symmetric and positive definite.
Define $\tJ$ on $\tV_{\bR}$ by
$\tJ(\otimes x_{\alpha}) = \otimes I_{\alpha} x_{\alpha}$,
where $I_{\alpha_0} = J_{\alpha_0}$, and $I_{\alpha}$ is the identity for $\alpha \neq \alpha_0$.
Then $\tJ$ is a complex structure.

For $x = \otimes x_{\alpha}, y = \otimes y_{\alpha} \in \tV_{\bR}$, we have
\begin{align*}
	\tE (x, \tJ y) & = \sum_{\alpha \in M} \Bigg( \tE_{\alpha} (x_{\alpha}, I_{\alpha} y_{\alpha}) \prod_{\substack{\beta \in M \\ \beta \neq \alpha}} \gamma_{\beta} (x_{\beta}, I_{\beta} y_{\beta}) \Bigg) \\
	& = \tE_{\alpha_0} (x_{\alpha_0}, J_{\alpha_0} y_{\alpha_0}) \prod_{\substack{\beta \in M \\ \beta \neq \alpha_0}} \gamma_{\beta} (x_{\beta}, y_{\beta}) \\
	& \qquad + \sum_{\substack{\alpha \in M \\ \alpha \neq \alpha_0}} \Bigg( \tE_{\alpha} (x_{\alpha}, y_{\alpha}) \gamma_{\alpha_0}(x_{\alpha_0}, J_{\alpha_0} y_{\alpha_0}) \prod_{\substack{\beta \in M \\ \beta \neq \alpha \\ \beta \neq \alpha_0}} \gamma_{\beta} (x_{\beta}, y_{\beta}) \Bigg)\\
	& = \tE_{\alpha_0} (y_{\alpha_0}, J_{\alpha_0} x_{\alpha_0}) \prod_{\substack{\beta \in M \\ \beta \neq \alpha_0}} \gamma_{\beta} (y_{\beta}, x_{\beta}) \\
	& \qquad + \sum_{\substack{\alpha \in M \\ \alpha \neq \alpha_0}} \Bigg( \tE_{\alpha} (y_{\alpha}, x_{\alpha}) \gamma_{\alpha_0}(y_{\alpha_0}, J_{\alpha_0} x_{\alpha_0}) \prod_{\substack{\beta \in M \\ \beta \neq \alpha \\ \beta \neq \alpha_0}} \gamma_{\beta} (y_{\beta}, x_{\beta}) \Bigg)\\
	& = \sum_{\alpha \in M} \Bigg( \tE_{\alpha} (y_{\alpha}, I_{\alpha} x_{\alpha}) \prod_{\substack{\beta \in M \\ \beta \neq \alpha}} \gamma_{\beta} (y_{\beta}, I_{\beta} x_{\beta}) \Bigg) \\
	& = \tE (y, \tJ x),
\end{align*}
because $\tE_{\alpha_0} (x, J_{\alpha_0} y)$ and $\gamma_{\alpha}$ are symmetric, $I_{\alpha}$ is the identity for $\alpha \neq \alpha_0$, $\tE_{\alpha_0} (x_{\alpha_0}, J_{\alpha_0} y_{\alpha_0})$ and $\gamma_{\alpha_0}(x_{\alpha_0}, J_{\alpha_0} y_{\alpha_0})$ are alternating, and using (\ref{J_alternate}).
Thus $\tE (x, \tJ y)$ is a symmetric form on $\tV_{\bR}$.
It follows that $E (x, \tJ y)$ is symmetric.

If $x=y$ we have
\begin{align*}
	\tE (x, \tJ x) &=
	 \sum_{\alpha \in M} \Bigg( \tE_{\alpha} (x_{\alpha}, I_{\alpha} x_{\alpha}) \prod_{\substack{\beta \in M \\ \beta \neq \alpha}} \gamma_{\beta} (x_{\beta}, I_{\beta} x_{\beta}) \Bigg) \\
	 &= \tE_{\alpha_0} (x_{\alpha_0}, J_{\alpha_0} x_{\alpha_0}) \prod_{\substack{\beta \in M \\ \beta \neq \alpha_0}} \gamma_{\beta} (x_{\beta}, x_{\beta}),
\end{align*}
so $\tE (x, \tJ y)$ is positive definite.

Our next task is to define a complex structure $J$ on $V_{\bR}$, where $V = \Res_{F/\bQ} \tV$.
Now, $V_{\bR} = \bigoplus_{\sigma \in \sG} \tV_{\bR}^{\sigma}$, so it is sufficient to define a complex structure $\tJ^{\sigma}$ on $\tV_{\bR}^{\sigma}$ for each $\sigma \in \sG$.
When $\sigma$ is the identity, we have already defined $\tJ$ on $\tV_{\bR}$.
In the same manner we can define $\tJ^{\sigma}$ for each $\sigma \in \sG$, such that $\tE^{\sigma} (x, \tJ^{\sigma} y)$ is symmetric and positive definite on $\tV_{\bR}^{\sigma}$.
Then $J = \sum_{\sigma \in \sG} \tJ^{\sigma}$ is a complex structure on $V_{\bR}$.

\subsection{Conclusion of the proof}
Since each $\tE^{\sigma} (x, \tJ^{\sigma} y)$ is symmetric, so is $E(x, Jy)$.
It remains to show that $E(x, Jy)$ is positive definite.
For each $\sigma \in \sG$, let $\alpha(\sigma)$ be the unique element of $M^{\sigma} \cap S_0$.
Then, we have
\begin{align*}
	E(x, Jx) &= \sum_{\sigma \in \sG} \tE^{\sigma} (x, \tJ^{\sigma}x) \\
	&= \sum_{\sigma \in \sG} \Bigg( \tE_{\alpha(\sigma)} (x_{\alpha(\sigma)}, \tJ^{\alpha(\sigma)}x_{\alpha(\sigma)})
		\prod_{\substack{\beta \in M^{\sigma} \\ \beta \neq \alpha(\sigma)}} \gamma_{\beta} (x_{\beta}, x_{\beta}) \Bigg) \\
	&= \sum_{\sigma \in \sG} Q^{\sigma} (x),
\end{align*}
where
\[
	Q^{\sigma} (x) = \tE_{\alpha(\sigma)} (x_{\alpha(\sigma)}, \tJ^{\alpha(\sigma)}x_{\alpha(\sigma)})
		\prod_{\substack{\beta \in M^{\sigma} \\ \beta \neq \alpha(\sigma)}} \gamma_{\beta} (x_{\beta}, x_{\beta})
\]
is a symmetric form on $V_{\bR}$.
For $\sigma$ equal to the identity, we know that $Q(x) = \tE(x,Jx)$ is positive definite.
Therefore there exists a positive integer $N$ such that
\[
	Q'(x) = NQ(x) + \sum_{\substack{\sigma \in \sG \\ \sigma \neq \id}} Q^{\sigma}(x)
\]
is positive definite (see Addington \cite{Addington}*{Lemma 4.9, p.~80}).
For each $j$ ($1 \leq j \leq t$), let $c_j \in F_j$ be such that $\alpha(c_j) > N$ if $\alpha \in S_0$, and $0 < \alpha(c_j) < 1$ if $\alpha \notin S_0$.
Replace each $\tE_{\alpha}$ by $c_{j(\alpha)}\tE_{\alpha}$.
Then $E(x, Jy)$ is positive definite.

An $H$-element for $Sp(V, E)$ is given by $\frac{1}{2} J$.
Since $H_0 = \sum_{\alpha \in S_0} H_{0, \alpha}$ is an $H$-element of $G$, and $\rho_{\alpha}$ satisfies the $H_1$-condition with respect to $H_{0, \alpha}$ and $\frac{1}{2} J_{\alpha}$ whenever $\alpha \in S_0$, it follows from our construction that $\rho$ satisfies the $H_1$-condition, and therefore defines a Kuga fiber variety.

\section{The general case}

We next derive the general case from the rigid case.

\begin{theorem}
\label{maintheorem}
Let $\rho$ be a representation of $G$ satisfying Satake's conditions in \S\textup{\ref{necessary}}.
Then some multiple of $\rho$ defines a Kuga fiber variety.
\end{theorem}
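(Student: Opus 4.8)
The plan is to deduce the general case from the rigid case, Theorem~\ref{rigid}, by enlarging the group; the inspiration is the ``sharing'' construction of Kuga and Ihara \cite{KugaIhara}. First I would make the reductions already used in the rigid case: we may assume $G$ is simply connected and that $\rho$ is a nontrivial multiple of a $\bQ$-irreducible representation (the $\bQ$-irreducible summands of an arbitrary $\rho$ are treated separately and then recombined by passing to a common multiple and forming direct sums of the resulting symplectic data, which respects the $H_1$-condition relative to a common $H$-element and, after a further scaling, positive definiteness). If $\rho$ is rigid we are done by Theorem~\ref{rigid}, so assume some $\bC$-irreducible constituent $\rho_0$ of $\rho_{\bC}$ is trivial on every noncompact factor of $G(\bR)$. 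By the stability condition and the hypothesis that $G$ has no compact $\bQ$-factor, every constituent of $\rho_{\bC}$ is nontrivial on \emph{at most} one noncompact factor, and some Galois conjugate of $\rho_0$ \emph{is} nontrivial on a noncompact factor; the failure of rigidity is exactly the presence of the constituents that are not.

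The key step is to construct a connected semisimple $\bQ$-group $G'$ of hermitian type with no compact $\bQ$-factor, and a representation $\psi$ of $G'$ satisfying Satake's conditions, such that the external tensor product $\rho' := \rho\otimes\psi$ of $G\times G'$ --- or a suitable $\bQ$-rational summand of it --- is \emph{rigid}, again satisfies Satake's conditions, and has $\rho'|_{G\times 1}$ a multiple of $\rho$ (since $G'$ then acts trivially, contributing only $\dim\psi$ copies of $\rho$). The role of $\psi$ is to furnish, for each Galois conjugate of $\rho_0$ that is non-rigid, a matching tensor factor that is nontrivial on exactly one noncompact factor of $G'(\bR)$ --- promoting that constituent to a rigid one --- while for the conjugates that are already rigid the matching tensor factor of $\psi$ is supported only on compact factors of $G'(\bR)$ (or is trivial), so that rigidity is preserved. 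The mechanism is cleanly visible in the $\bQ$-simple case $G = \Res_{F/\bQ}\tG$: take $G' = \Res_{F/\bQ}\tG'$ and $\rho' = \Res_{F/\bQ}(\tilde\rho\otimes_F\tilde\psi)$ for an absolutely simple $\tG'$ over $F$ that is of hermitian type precisely at the real places of $F$ where $\tG$ is compact and is compact elsewhere, with $\tilde\psi$ a Satake-admissible $F$-rational representation (for example $\tG'$ a unitary group with suitably prescribed signatures and $\tilde\psi$ the sum of the standard representation and its contragredient); restricting scalars from the tensor product \emph{over $F$} leaves only the ``diagonal'' constituents $\tilde\rho_\sigma\otimes\tilde\psi_\sigma$, each nontrivial on precisely one noncompact factor of $(G\times G')(\bR)$.

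Granting the pair $(G',\psi)$, Theorem~\ref{rigid} yields a positive integer $k$ and a symplectic representation $k\rho'\colon G\times G'\to Sp(V',E')$ satisfying the $H_1$-condition with respect to an $H$-element of $G\times G'$, which we write as $H_0 + H_0'$ with $H_0$ an $H$-element of $G$ and $H_0'$ an $H$-element of $G'$, and $\tfrac12 J'$ with $J'\in\germS(V',E')$. Restricting along $g\mapsto(g,1)$ and using that $H_0'\in Z(\germk')$ commutes with $\germg$, so that $d(k\rho')(0,H_0')$ commutes with $d(k\rho')(g,0)$ for every $g\in\germg$ by the Lie homomorphism property, I obtain
\[
	\bigl[\, d(k\rho')(H_0) - \tfrac12 J',\ d(k\rho')(g,0)\,\bigr] = 0 \qquad\text{for all } g\in\germg .
\]
Thus $(k\rho')|_{G\times 1}$, which is a multiple of $\rho$ and takes values in $Sp(V',E')$, satisfies the $H_1$-condition with respect to $H_0$ and $\tfrac12 J'$, and therefore defines a Kuga fiber variety. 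No further adjustment of the polarizing form is needed, since its positive definiteness was already arranged inside the proof of Theorem~\ref{rigid}.

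The main obstacle I anticipate is the construction of $(G',\psi)$. One must (i) produce an absolutely simple group over the relevant totally real field that is of hermitian type at a prescribed set of real places and compact at the rest --- available, with room to spare, by taking a unitary group and prescribing the signatures of a hermitian form --- together with a Satake-admissible representation; and, more delicately, (ii) arrange the rational and Galois-theoretic data so that every $\bC$-irreducible constituent surviving in the tensor product is rigid, that is, match the non-rigid conjugates of $\rho_0$ to the noncompact-supported constituents of $\psi$ and the rigid conjugates to the rest, compatibly with the action of $\Gal$ permuting them. When a constituent of $\rho_{\bC}$ is supported on more than one factor, $\rho$ is no longer a restriction of scalars from a single field, and carrying this out in general forces one to work over intermediate fields of the $F_j$ occurring in the restriction-of-scalars decomposition of $G$ (with box-product representations), and to carry along the rationality caveats already present in the proof of Theorem~\ref{rigid} (Schur indices, and the passage to multiples needed to descend representations to $\bQ$); if a single tensoring does not suffice, one can instead induct on the number of non-rigid $\bC$-irreducible constituents of $\rho_{\bC}$, rigidifying them one at a time.
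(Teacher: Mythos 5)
Your proposal follows essentially the same route as the paper: reduce to Theorem~\ref{rigid} by tensoring the non-rigid constituents with representations of an auxiliary $\bQ$-group of hermitian type that is noncompact exactly at the places where those constituents live, then restrict back to $G$, the $H_1$-condition surviving because the auxiliary $H$-element commutes with $d\rho(\germg)$. The ``main obstacle'' you flag is resolved in the paper precisely by your stated fallback: it inducts on an \emph{index of rigidity}, at each step taking $H = \Res_{F/\bQ}SL_1(B)$ for a quaternion algebra $B$ over the Galois closure $F$ split at a single infinite place $\alpha_0$ where one chosen non-rigid constituent $\mu$ is nontrivial, and forming $\sum_{\sigma}\mu^{\sigma}\otimes p_{\alpha_0}^{\sigma}$ (a multiple of which is $\bQ$-rational since it is invariant under $\Aut(\bC)$), thereby rigidifying one Galois coset at a time and sidestepping the simultaneous Galois-matching problem you describe.
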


\begin{proof}
We keep the notations used in the proof of Theorem \ref{rigid}.
Without loss of generality we assume that $\rho$ is a primary representation, i.e., a multiple of an irreducible representation.
An irreducible subrepresentation $\rho_0$ of $\rho_{\bC}$ is said to be \emph{rigid} if it is nontrivial on some noncompact factor of $G(\bR)$.
We define the \emph{index of rigidity} of $\rho$ to be the cardinality of the set $\{ \sigma \in \sG \mid \mu^{\sigma} \text{ is rigid} \}$, where $\mu$ is an irreducible subrepresentation of $\rho_{\bC}$.
It depends only on $\rho$, and not on the choice of $\mu$.

Suppose $\rho$ is not rigid. Then there exists a subrepresentation $\mu$ of $\rho_{\bC}$ such that $\mu$ is trivial on all noncompact factors of $G_{\bR}$.
Let $\sG_1 = \{ \sigma \in \sG \mid \mu^{\sigma} = \mu \}$.
Then $\rho_{\bC}$ is equivalent to a multiple of $\sum_{\sigma \in \sG/\sG_1} \mu^{\sigma}$.

Let $\alpha_0 \in S_j$ be such that $\mu$ is nontrivial on $G_{\alpha_0}$.
Extend $\alpha_0$ to an embedding of $F$ into $\bR$, and denote it again by $\alpha_0$.
Let $B$ be a quaternion algebra over $F$ which splits at $\alpha_0$ and ramifies at all other infinite places.
Let $SL_1(B)$ be the group of norm $1$ units of $B$, and $H = \Res_{F/\bQ} SL_1(B)$.
Then $H(\bR) = \prod_{\alpha \in \overline{S}} H_{\alpha}$, where $\overline{S}$ is the set of embeddings of $F$ into $\bR$, and $H_{\alpha} = H \otimes_{F, \alpha} \bR$.

Define a representation of $G \times H$  by
\[
	\rho_1 = \sum_{\sigma \in \sG} \mu^{\sigma} \otimes p_{\alpha_0}^{\sigma},
\]
where $p_{\alpha_0} \colon H(\bR) \to H_{\alpha_0} = SL_2(\bR)$ is the projection.
Since $\rho_1$ is invariant under any automorphism of $\bC$, some multiple $n \rho_1$ of $\rho_1$ is defined over $\bQ$.
Now $p_{\alpha_0}^{\sigma} = p_{\alpha_0^{\sigma}}$.
If $\sigma$ is not the identity then $\alpha_0^{\sigma} \neq \alpha_0$.
Since $H_{\alpha}$ is compact for $\alpha \neq \alpha_0$, we see that $n \rho_1$ satisfies the stability condition.
We verify that $n \rho_1$ satisfies all of Satake's conditions.

Now, $\mu^{\sigma} \otimes p_{\alpha}^{\sigma}$ is rigid whenever $\mu^{\sigma}$ is rigid, and it is also rigid when $\sigma$ is the identity.
Hence the index of rigidity of $n \rho_1$ is greater than the index of rigidity of $\rho$.
Continuing this process, if necessary, we will eventually get a representation $\tilde{\rho}$ whose index of rigidity is the cardinality of $\sG$, i.e., one which is rigid.
Then Theorem~\ref{rigid} implies that some multiple of $\tilde{\rho}$ defines a Kuga fiber variety.
Since the restriction of $\tilde{\rho}$ to $G$ is a multiple of $\rho$, this completes the proof.
\end{proof}

\section{An example}
\label{example}

Let $F = \bQ (\sqrt{3})$.
Let $\alpha_1, \alpha_2$ be the embeddings of $F$ into $\bR$.
Let $B$ be a quaternion algebra over $F$ which splits at $\alpha_1$ and ramifies at $\alpha_2$.
Let $\tG_1$ be the group of norm $1$ units in $B$, and, $G_1 = \Res_{F/\bQ} \tG_1$.
Let $\tG_2$ be a group over $F$ such that $\tG_2 \otimes_{F, \alpha_1} \bR = SU(5,1)$ and
$\tG_2 \otimes_{F, \alpha_2} \bR = SU(6,0)$.
Let $G_2 = \Res_{F/\bQ} \tG_2$.
Let $G = G_1 \times G_2$. Then
\[
	G(\bR) = SL_2(\bR) \times SU(2) \times SU(5, 1) \times SU(6, 0).
\]

We shall classify all representations of $G$ which define Kuga fiber varieties.
Let
\begin{align*}
	& p_1 \colon G(\bR) \to SL_2(\bR),\\
	& p_2 \colon G(\bR) \to SU(2),\\
	& p_3 \colon G(\bR) \to SU(5, 1),\\
	& p_4 \colon G(\bR) \to SU(6, 0),
\end{align*}
be the projections.
Then, the representations of $G$ defining Kuga fiber varieties are equivalent over $\bR$ to linear combinations of the following:
\begin{enumerate}
\item $p_1 \oplus p_2$,
\item $p_1 \otimes p_2$,
\item $\bigwedge^k p_3 \oplus \bigwedge^k p_4$,
\item $\left( \bigwedge^j p_3 \otimes \bigwedge^k p_4 \right) \oplus \left( \bigwedge^k p_3 \otimes \bigwedge^j p_4 \right)$,
\item $\left( p_1 \otimes \bigwedge^k p_4 \right) \oplus \left( p_2 \otimes \bigwedge^k p_3 \right)$.
\end{enumerate}

Of these, the first four are direct sums of representations of either $G_1$ alone, or $G_2$ alone.
The last one is a representation of the product group in an essential manner.
A general fiber of the corresponding Kuga fiber variety is a simple abelian variety; the semisimple part of its Hodge group is isogenous to $G = G_1 \times G_2$.

\begin{bibdiv}
\begin{biblist}

\bib{Abdulali1}{thesis}{
	author={Abdulali, Salman},
	title={Absolute Hodge Cycles in Kuga Fiber Varieties},
	date={1985},
	institution={State University of New York, Stony Brook},
	type={Thesis},
	review={\MR{2634784}},
}

\bib{Abdulali1988}{article}{
	author={Abdulali, Salman},
	title={Zeta functions of Kuga fiber varieties},
	journal={Duke Math. J.},
	volume={57},
	date={1988},
	pages={333\ndash 345},
	review={\MR{952238}},
	doi={10.1215/S0012-7094-88-05715-8},
}

\bib{Abdulali1994a}{article}{
	author={Abdulali, Salman},
	title={Conjugates of strongly equivariant maps},
	journal={Pacific J. Math.},
	volume={165},
	date={1994},
	pages={207\ndash 216},
	review={\MR{1300831}},
	doi={10.2140/pjm.1994.165.207},
}

\bib{Abdulali1994c}{article}{
	author={Abdulali, Salman},
	title={Algebraic cycles in families of abelian varieties},
	journal={Canad. J. Math.},
	volume={46},
	date={1994},
	pages={1121\ndash 1134},
	review={\MR{1304336}},
	doi={10.4153/CJM-1994-063-0},
}

\bib{Abdulali2002}{article}{
	author={Abdulali, Salman},
	title={Hodge structures on abelian varieties of type III},
	journal={Ann. of Math. (2)},
	volume={155},
	date={2002},
	pages={915\ndash 928},
	review={\MR{1923969}},
	doi={10.2307/3062136},
}

\bib{Addington}{article}{
	author={Addington, Susan L.},
	title={Equivariant holomorphic maps of symmetric domains},
	journal={Duke Math. J.},
	volume={55},
	date={1987},
	pages={65\ndash 88},
	review={\MR{883663}},
	doi={10.1215/S0012-7094-87-05504-9},
}

\bib{Borel1972}{article}{
	author={Borel, Armand},
	title={Some metric properties of arithmetic quotients of symmetric spaces and an extension theorem},
	journal={J. Differential Geom.},
	volume={6},
	date={1972},
	pages={543\ndash 560},
	review={\MR{0338456}},
	doi={10.4310/jdg/1214430642},
}

\bib{Clozel}{article}{
   author={Clozel, Laurent},
   title={Equivariant embeddings of Hermitian symmetric spaces},
   journal={Proc. Indian Acad. Sci. Math. Sci.},
   volume={117},
   date={2007},
   pages={317--323},
   review={\MR{2352051}},
   doi={10.1007/s12044-007-0027-8},
}

\bib{Deligne}{article}{
	author={Deligne, Pierre},
	title={Formes modulaires et repr\'esentations $\ell$-adiques},
	book={
		title={S\'eminaire Bourbaki, 21{e} ann\'ee (1968/69), Exp. No. 355},
		series={Lecture Notes in Math.},
		volume={179},
		publisher={Springer-Verlag},
		place={Berlin-Heidelberg-New York},
		date={1971},
	},
	pages={139\ndash 172},
	review={\MR{3077124}},
	doi={10.1007/BFb0058810},
}

\bib{DeligneK3}{article}{
   author={Deligne, Pierre},
   title={La conjecture de Weil pour les surfaces $K3$},
   journal={Invent. Math.},
   volume={15},
   date={1972},
   pages={206--226},
   issn={0020-9910},
   review={\MR{0296076}},
   doi={10.1007/BF01404126},
}

\bib{DeligneWeil}{article}{
   author={Deligne, Pierre},
   title={La conjecture de Weil. I},
   journal={Inst. Hautes \'{E}tudes Sci. Publ. Math.},
   number={43},
   date={1974},
   pages={273--307},
   issn={0073-8301},
   review={\MR{0340258}},
   doi={10.1007/BF02684373},
}

\bib{Deligne1979}{article}{
	author={Deligne, Pierre},
	title={Vari\'et\'es de Shimura: interpr\'etation modulaire, et techniques de construction de mod\`eles canoniques},
	book={
		title={Automorphic Forms, Representations and $L$-functions (Corvallis, Ore., 1977), Part 2},
		series={Proc. Sympos. Pure Math.},
		volume={33},
		publisher={Amer. Math. Soc.},
		place={Providence, R.I.},
		date={1979},
	},
	pages={247\ndash 289},
	review={\MR{546620}},
	doi={10.1090/pspum/033.2/546620},
}

\bib{900}{article}{
	author={Pierre Deligne (notes by J.~S.~Milne)},
	title={Hodge cycles on abelian varieties},
	book={
		title={Hodge Cycles, Motives, and Shimura Varieties},
		series={Lecture Notes in Math.},
		volume={900},
		place={Berlin},
		publisher={Springer-Verlag},
		date={1982, corrected 2nd printing, 1989},
	},
	pages={9\ndash 100},
	review={\MR{654325}},
	doi={10.1007/978-3-540-38955-2\_3},
}

\bib{Gordon1}{article}{
	author={Gordon, B. Brent},
	title={Topological and algebraic cycles in Kuga-Shimura varieties},
	journal={Math. Ann.},
	volume={279},
	date={1988},
	pages={395\ndash 402},
	review={\MR{922423}},
	doi={10.1007/BF01456276},
}

\bib{Gordon2}{article}{
	author={Gordon, B. Brent},
	title={Algebraic cycles and the Hodge structure of a Kuga fiber variety},
	journal={Trans. Amer. Math. Soc.},
	volume={336},
	date={1993},
	pages={933\ndash 947},
	review={\MR{1097167}},
	doi={10.1090/S0002-9947-1993-1097167-2},
}

\bib{GreenGriffithsKerr2012}{book}{
   author={Green, Mark},
   author={Griffiths, Phillip},
   author={Kerr, Matt},
   title={Mumford-Tate Groups and Domains: Their Geometry and Arithmetic},
   series={Annals of Mathematics Studies},
   volume={183},
   publisher={Princeton Univ. Press},
   place={Princeton, NJ},
   date={2012},
   pages={viii+289},
   isbn={978-0-691-15425-1},
   review={\MR{2918237}},
}

\bib{GreenGriffithsKerr2013}{book}{
   author={Green, Mark},
   author={Griffiths, Phillip},
   author={Kerr, Matt},
   title={Hodge theory, Complex Geometry, and Representation Theory},
   series={CBMS Regional Conference Series in Mathematics},
   volume={118},
   publisher={published for the Conference Board of the Mathematical
   Sciences by the Amer. Math. Soc.},
   place={Providence, RI},
   date={2013},
   pages={iv+308},
   isbn={978-1-4704-1012-4},
   review={\MR{3115136}},
   doi={10.1090/cbms/118},
}

\bib{HallKuga}{article}{
	author={Hall, Rita},
	author={Kuga, Michio},
	title={Algebraic cycles in a fiber variety},
	journal={Sci. Papers College Gen. Ed. Univ. Tokyo},
	volume={25},
	date={1975},
	pages={1\ndash 6},
	review={\MR{0469919}},
}

\bib{Helgason}{book}{
	author={Helgason, Sigurdur},
	title={Differential Geometry, Lie Groups, and Symmetric Spaces},
	series={Graduate Studies in Mathematics},
	volume={34},
	note={Corrected reprint of the 1978 original},
	publisher={Amer. Math. Soc., Providence, RI},
	date={2001},
	review={\MR{1834454}},
}

\bib{Kuga}{book}{
	author={Kuga, Michio},
	title={Fiber Varieties over a Symmetric Space whose Fibers are Abelian Varieties I, II},
	series={Lecture Notes},
	publisher={Univ. Chicago},
	place={Chicago},
	date={1964},
	note={Reissued as \cite{Kuga2018}},
}

\bib{Kuga1982}{article}{
	author={Kuga, Michio},
	title={Algebraic cycles in gtfabv},
	journal={J. Fac. Sci. Univ. Tokyo Sect. IA Math.},
	volume={29},
	date={1982},
	pages={13\ndash 29},
	review={\MR{657869}},
}

\bib{Kuga1984}{article}{
	author={Kuga, Michio},
	title={Chemistry and GTFABV's},
	book={
		title={Automorphic Forms of Several Variables (Katata, 1983)},
		editor={Satake, I.},
		editor={Morita, Y.},
		series={Progr. Math.},
		volume={46},
		publisher={Birkh\"auser},
		place={Boston, MA},
		date={1984},
	},
	pages={269\ndash 281},
	review={\MR{0763018}},
}

\bib{Kuga2018}{book}{
	author={Kuga, Michio},
	title={Kuga Varieties: Fiber Varieties over a Symmetric Space whose Fibers are Abelian Varieties},
	series={Classical Topics in Mathematics},
	volume={9},
	publisher={Higher Education Press},
	place={Beijing},
	date={2018},
	note={New printing of \cite{Kuga}},
  	isbn={978-7-04-050304-3},
  	review={\MR{3889016}},
}

\bib{KugaIhara}{article}{
	author={Kuga, Michio},
	author={Ihara, Shin-ichiro},
	title={Family of families of abelian varieties},
	book={
		title={Algebraic Number Theory
				(Kyoto Internat. Sympos., Res. Inst. Math. Sci., Univ. Kyoto, Kyoto, 1976)},
		editor={Iyanaga, Sh{\^o}kichi},
		publisher={Japan Soc. Promotion Sci.},
		place={Tokyo},
		date={1977},
	},
	pages={129\ndash 142},
	review={\MR{0476764}},
}

\bib{KugaSatake}{article}{
	author={Kuga, Michio},
	author={Satake, Ichiro},
	title={Abelian varieties attached to polarized $K\sb{3}$-surfaces},
	journal={Math. Ann.},
	volume={169},
	date={1967},
	pages={239\ndash 242},
	note={Corrections, ibid. \textbf{173} (1967), 322},
	review={\MR{210717}, \MR{215862}},
	doi={10.1007/BF01399540},
}

\bib{KugaShimura}{article}{
	author={Kuga, Michio},
	author={Shimura, Goro},
	title={On the zeta function of a fibre variety whose fibres are abelian varieties},
	journal={Ann. of Math. (2)},
	volume={82},
	date={1965},
	pages={478\ndash 539},
	review={\MR{0184942}},
	doi={10.2307/1970709},
}

\bib{Milne2013}{article}{
   author={Milne, James S.},
   title={Shimura varieties and moduli},
   conference={
      title={Handbook of Moduli. Vol. II},
   },
   book={
      series={Adv. Lect. Math. (ALM)},
      volume={25},
      publisher={Int. Press, Somerville, MA},
   },
   date={2013},
   pages={467--548},
   review={\MR{3184183}},
}

\bib{Mumford1966}{article}{
	author={Mumford, David},
	title={Families of abelian varieties},
	book={
		title={Algebraic Groups and Discontinuous Subgroups (Boulder, Colo., 1965)},
		series={Proc. Sympos. Pure Math.},
		editor={Borel, Armand},
		editor={Mostow, G.\,D.},
		volume={9},
		publisher={Amer. Math. Soc.},
		place={Providence, RI},
		date={1966},
	},
	pages={347\ndash 351},
	review={\MR{0206003}},
	doi={10.1090/pspum/009/0206003},
}

\bib{Mumford1969}{article}{
	author={Mumford, David},
	title={A note of Shimura's paper \bibquotes{Discontinuous groups and abelian varieties}},
	journal={Math. Ann.},
	volume={181},
	date={1969},
	pages={345\ndash 351},
	review={\MR{248146}},
	doi={10.1007/BF01350672},
}

\bib{Ohta1}{article}{
	author={Ohta, Masami},
	title={On the zeta function of an abelian scheme over the Shimura curve},
	journal={Japan. J. Math. (N.S.)},
	volume={9},
	date={1983},
	pages={1\ndash 25},
	review={\MR{722534}},
	doi={10.4099/math1924.9.1},
}

\bib{Ohta2}{article}{
	author={Ohta, Masami},
	title={On the zeta function of an abelian scheme over the Shimura curve II},
	book={
		title={Galois Groups and their Representations},
		series={Adv. Stud. Pure Math.},
		volume={2},
		publisher={North-Holland},
		place={Amsterdam-New York},
		date={1983},
	},
	pages={37\ndash 54},
	review={\MR{732460}},
	doi={10.2969/aspm/00210037},
}

\bib{Patrikis}{article}{
   author={Patrikis, Stefan},
   title={Mumford-Tate groups of polarizable Hodge structures},
   journal={Proc. Amer. Math. Soc.},
   volume={144},
   date={2016},
   number={9},
   pages={3717--3729},
   issn={0002-9939},
   review={\MR{3513533}},
   doi={10.1090/proc/13040},
}

\bib{Satake1964}{article}{
	author={Satake, Ichiro},
	title={Holomorphic embeddings of symmetric domains into a Siegel space},
	book={
		author={Kuga, Michio},
		title={Appendix II to Michio Kuga, Fiber Varieties over a Symmetric Space whose Fibers are Abelian Varieties},
		volume={1},
		publisher={Univ. Chicago},
	},
	date={1964},
	pages={145--153},
	note={Reprinted in \cite{Kuga2018}*{pp.~101--106}},
}

\bib{Satake1965a}{article}{
   author={Satake, Ichiro},
   title={Holomorphic imbeddings of symmetric domains into a Siegel space},
   conference={
      title={Proc. Conf. Complex Analysis},
      address={Minneapolis},
      date={1964},
   },
   book={
      publisher={Springer},
      place={Berlin},
   },
   date={1965},
   pages={40--48},
  review={\MR{0176107}},
}

\bib{Satake1965b}{article}{
	author={Satake, Ichiro},
	title={Holomorphic imbeddings of symmetric domains into a Siegel space},
	journal={Amer. J. Math.},
	volume={87},
	date={1965},
	pages={425\ndash 461},
	review={\MR{196134}},
	doi={10.2307/2373012},
}

\bib{SatakeBoulder}{article}{
   author={Satake, Ichiro},
   title={Symplectic representations of algebraic groups},
	book={
		title={Algebraic Groups and Discontinuous Subgroups (Boulder, Colo., 1965)},
		series={Proc. Sympos. Pure Math.},
		editor={Borel, Armand},
		editor={Mostow, G.\,D.},
		volume={9},
		publisher={Amer. Math. Soc.},
		place={Providence, RI},
		date={1966},
	},
   pages={352--357},
   review={\MR{0209409}},
   doi={10.1090/009/0209409},
}

\bib{Satake1966}{article}{
   author={Satake, Ichiro},
   title={Clifford algebras and families of abelian varieties},
   journal={Nagoya Math. J.},
   volume={27},
   date={1966},
   pages={435--446},
   note={Corrections, ibid. \textbf{31} (1968), 295--296},
   doi={10.1017/S0027763000026295},
   review={\MR{0210716}},
   review={\MR{0219544}},
}

\bib{Satake1967}{article}{
	author={Satake, Ichiro},
	title={Symplectic representations of algebraic groups satisfying a certain analyticity condition},
	journal={Acta Math.},
	volume={117},
	date={1967},
	pages={215\ndash 279},
	review={\MR{0215859}},
	doi={10.1007/BF02395046},
}

\bib{Satakebook}{book}{
	author={Satake, Ichiro},
	title={Algebraic Structures of Symmetric Domains},
	series={Publ. Math. Soc. Japan},
	volume={14 (Kan\^o Mem. Lect. 4)},
	publisher={Iwanami Shoten, Tokyo, and Princeton Univ. Press},
	place={Princeton, NJ},
	date={1980},
	review={\MR{591460}},
}

\bib{Satake1997}{article}{
   author={Satake, Ichiro},
   title={Equivariant holomorphic embeddings of symmetric domains and
   applications},
   journal={S\=urikaisekikenky\=usho K\=oky\=uroku},
   number={1002},
   date={1997},
   pages={19--31},
   review={\MR{1622814}},
}

\bib{Shimura}{article}{
	author={Shimura, Goro},
	title={Moduli of abelian varieties and number theory},
	book={
		title={Algebraic Groups and Discontinuous Subgroups (Boulder, Colo., 1965)},
		series={Proc. Sympos. Pure Math.},
		editor={Borel, Armand},
		editor={Mostow, G. D.},
		volume={9},
		publisher={Amer. Math. Soc.},
		place={Providence, RI},
		date={1966},
	},
	pages={312\ndash 332},
	review={\MR{0237507}},
	doi={10.1090/pspum/009/0237507},
}

\bib{Tjiok}{thesis}{
	author={Tjiok, Mouw-ching},
	title={Algebraic Cycles in a Certain Fiber Variety},
	date={1980},
	institution={State University of New York, Stony Brook},
	type={Thesis},
	review={\MR{2630815}},
}

\end{biblist}
\end{bibdiv}

\end{document}